\def\th@plain{%
	\thm@notefont{}
	\itshape 
}
\def\th@definition{%
	\thm@notefont{}
	\normalfont 
}
\newtheorem{theorem}{Theorem}[section]
\newtheorem{proposition}[theorem]{Proposition}
\newtheorem{remark}[theorem]{Remark}
\newtheorem{assumption}[theorem]{Assumption}
\newtheorem{definition}[theorem]{Definition}
\newtheorem{example}[theorem]{Example}
\newcommand{\Norm}[2]{\ensuremath{\left\|#1\right\|_{#2}}}
\newcommand{\R}{\mathbb{R}}
\newcommand{\U}{{U}}
\newcommand{\Y}{{Y}}
\DeclareMathOperator*{\argmin}{arg\,min}
\newcommand{\rev}[1]{\textcolor{black}{#1}}
\title[On the Nonuniqueness and Instability of Solutions]{On the Nonuniqueness and Instability of Solutions
of Tracking-Type Optimal Control Problems}
\author[C.\ Christof and D.\ Hafemeyer]{}
\subjclass{49J27, 49K40, 49N45, 90C26}
\keywords{optimal control, nonuniqueness, global solutions, nonlinear operators}
 \email{christof@ma.tum.de}
 \email{dominik.hafemeyer@tum.de}
\thanks{This research was conducted within the International Research Training Group IGDK 1754,
funded by the German Science Foundation (DFG) and the Austrian Science Fund (FWF) under
project number 188264188/GRK1754}
\thanks{$^*$Corresponding author: Constantin Christof}
\begin{document}
\maketitle

\centerline{\scshape Constantin Christof$^*$}
\medskip
{\footnotesize
\centerline{Department of Mathematics, Technische Universit\"{a}t M\"{u}nchen,}
\centerline{ Boltzmannstr.\ 3, 85748 Garching b.\ M\"{u}nchen, Germany}
}
\medskip

\centerline{\scshape Dominik Hafemeyer}
\medskip
{\footnotesize
\centerline{Department of Mathematics, Technische Universit\"{a}t M\"{u}nchen,}
\centerline{ Boltzmannstr.\ 3, 85748 Garching b.\ M\"{u}nchen, Germany}
}

\bigskip

 \centerline{(Communicated by the associate editor name)}

\begin{abstract}
We study tracking-type optimal control problems that involve a
non-affine, weak-to-weak continuous control-to-state mapping, a desired state $y_d$,
and a desired control $u_d$. It is proved that such problems are always nonuniquely
solvable for certain choices of the tuple $(y_d, u_d)$ and instable in the
sense that the set of solutions (interpreted as a multivalued function of $(y_d, u_d)$)
does not admit a continuous selection.
\end{abstract}

\maketitle


\section{Introduction} 
\label{sec:1}

This paper is concerned with the uniqueness and stability of solutions of tracking-type optimal control problems of the form
\begin{linenomath}\begin{equation}\tag*{\textup{P($y_d$,$u_d$)}}
\label{eq:problem}
\min_{(y,u) \in \Y \times \U}\ \Norm{y-y_d}{\Y}^p +  \Norm{u-u_d}{\U}^p 
\quad
\text{s.t. } y = S(u).
\end{equation}\end{linenomath}
Our standing assumptions on the quantities in \ref{eq:problem} are as follows:

\begin{assumption}~
\label{ass:standing}
\begin{enumerate}[leftmargin=0.865cm,label=(\roman*)]
\item $(\Y, \Norm{\cdot}{\Y} )$ and $(\U, \Norm{\cdot}{\U})$ are uniformly convex, uniformly smooth Banach spaces,
\item $p \in (1, \infty)$ is arbitrary but fixed,
\item $y_d \in \Y$ and $u_d \in \U$ are \rev{problem parameters (the desired state/desired control)},
\item $S\colon\U \to \Y$ is a function that is not affine-linear and satisfies 
\begin{linenomath}\begin{equation*}
u_n \xrightharpoonup{n\rightarrow\infty} u \text{ in }U
 \quad \implies \quad 
S(u_{n}) \xrightharpoonup{n\rightarrow\infty} S(u) \text{ in }Y.
\end{equation*}\end{linenomath}
Here, the symbol ``$\rightharpoonup$'' denotes weak convergence.
\end{enumerate}
\end{assumption}

Due to their simple structure and since they allow to easily construct situations
with known analytic solutions (just choose $u_d := \bar u$ and $y_d := S(\bar u)$ for some given
$\bar u \in U$), tracking-type optimal control problems of the form \ref{eq:problem} are considered very
frequently in the literature -- in particular in the case where the exponent $p$ is equal
to two and the spaces $\Y$ and $\U$ are Hilbert.
Compare, for instance, with \cite{AliDeckelnickHinze2020,Betz2014,Christof2018,Gugat2005,Herty2007,OPTPDE,OPTPDE2}
and the tangible examples in \cref{sec:3} in this context. 
Recently, it was demonstrated in \cite{Pighin2020}
by means of an explicit construction for a boundary control problem with $u_d = 0$
governed by a semilinear elliptic partial differential equation that
problems of the type \ref{eq:problem} can possess multiple global solutions. The aim of this brief
note is to point out that tracking-type optimal control problems which involve a
desired state $y_d$, a desired control $u_d$, and a non-affine, weak-to-weak continuous
control-to-state map $S\colon u \mapsto y$ are indeed always nonuniquely solvable for certain
choices of the tuple $(y_d, u_d)$ -- regardless of whether the control-to-state operator
arises from a partial differential equation, a variational inequality, a differential
inclusion or something else. We further demonstrate that the same effects, that are
responsible for this nonuniqueness of solutions, also cause the problem \ref{eq:problem} to be
instable in the sense that the set of solutions of \ref{eq:problem} (interpreted as a multivalued
map of $(y_d, u_d)$) does not admit a continuous selection. For the main results of this
note, we refer the reader to \cref{th:nonunique,th:nonstable}.

Although, at the end of the day, just consequences of classical results from nonlinear
approximation theory and a simple identification with a metric projection,
we believe that the observations made in this paper are of sufficient interest to justify 
pointing them out and 
making them available in a tangible format -- in particular due to
their very general nature and their potential consequences for, e.g., the study of
turnpike properties, cf.\ the discussion in \cite{Pighin2020} and the references therein. 
We remark that, for the special case of trajectory control problems, 
arguments analogous to those in this note have already been used in \cite{Dontchev1993,Muselli2007,Zolezzi1981}.

\rev{The structure of the remainder of this paper is as follows:
\Cref{sec:2} is concerned with the analysis of the abstract problem \ref{eq:problem}.
Here, we establish the solvability of \ref{eq:problem} for all $(y_d, u_d) \in Y \times U$,
see \cref{prop:existence}, recall the concept of Chebyshev sets, see \cref{def:Chebyshev},
and prove our main results on the nonuniqueness 
and instability of solutions of \ref{eq:problem}, see \cref{th:nonunique,th:nonstable}.
\Cref{sec:2} also contains some comments on our standing assumptions and on the consequences of our analysis,
see \cref{rem:comments}.
In \cref{sec:3}, we demonstrate by means of four examples that \cref{th:nonunique,th:nonstable}
can be applied to a multitude of different problems arising in the field of optimal control, see \cref{ex:1,ex:2,ex:3,ex:4}. 
This section also gives some pointers 
on how \cref{ass:standing} can be verified in practice.
In \cref{sec:4}, we finally conclude the paper with additional remarks on possible extensions of 
our analysis and on cases in which the spaces $Y$ and $U$ lack the properties of uniform convexity and uniform smoothness.}

\section{Nonuniqueness and Instability of Solutions}
\label{sec:2}

\rev{We begin our analysis of the problem \ref{eq:problem} by proving its solvability:}

\begin{proposition}[Existence of Global Minimizers]
\label{prop:existence}
In the situation of \cref{ass:standing}, 
the minimization problem \ref{eq:problem} admits at least one global solution
$(\bar y, \bar u) \in Y \times U$ for every choice of the tuple $(y_d, u_d) \in Y \times U$.
\end{proposition}
\begin{proof}
The claim follows straightforwardly from the direct method of calculus of
variations. Indeed, if we consider a minimizing sequence $\{(y_n, u_n)\}_{n \in \mathbb{N}} \subset Y \times U$
of \ref{eq:problem}, then the sequences $\{y_n\}_{n \in \mathbb{N}} \subset Y$ and $\{u_n\}_{n \in \mathbb{N}} \subset U$ are trivially bounded by the
structure of the objective function of \ref{eq:problem}, and it follows from our assumption of
uniform convexity and the theorems of Milman-Pettis and Banach-Alaoglu, see \cite{Pettis1939}
and \cite[Section V-2]{Yosida1980}, that the spaces $\Y$ and $\U$ are reflexive and that we may extract a subsequence
of $\{(y_n, u_n)\}_{n \in \mathbb{N}}$ (for simplicity denoted by the same symbol) such that 
$\{y_n\}_{n \in \mathbb{N}}$
converges weakly in $\Y$ to some $\bar y \in Y$ and $\{u_n\}_{n \in \mathbb{N}}$ 
converges weakly in $\U$ to some $\bar u \in U$.
Note that the weak-to-weak continuity of $S$ implies that $\bar y = S(\bar u)$ has to hold. 
In combination with the weak lower semicontinuity of continuous and
convex functions, see \cite[Corollary 4.1.14]{BorweinVanderwerff2010}, it now follows immediately that
\begin{linenomath}\begin{equation*}
\begin{aligned}
\inf_{(y,u) \in \Y \times \U,\, y = S(u)}
\Norm{y-y_d}{\Y}^p +  \Norm{u-u_d}{\U}^p 
&=
\lim_{n \to \infty} \Norm{y_n-y_d}{\Y}^p +  \Norm{u_n-u_d}{\U}^p 
\\
&\geq 
\Norm{\bar y-y_d}{\Y}^p +  \Norm{\bar u-u_d}{\U}^p.
\end{aligned}
\end{equation*}\end{linenomath}
This shows that $(\bar y, \bar u)$ is a global solution of \ref{eq:problem} and completes the proof.
\end{proof}

\rev{Having established that \ref{eq:problem} possesses a solution
for all $(y_d, u_d) \in Y \times U$, we can turn our attention to questions of uniqueness. 
For the discussion of this topic, we require the following classical concept, see, e.g., \cite[Section 0]{Klee1961}:}

\begin{definition}[Chebyshev Set]\label{def:Chebyshev}
\rev{
A subset $M$ of a metric space $(Z, \rho)$ is called a Chebyshev set if, for each $z \in Z$,
there exists a unique nearest element $\bar m \in M$, i.e., a unique $\bar m \in M$ satisfying
$\rho(z, \bar m) = \inf\{\rho(z, m) \mid m \in M\}$. }
\end{definition}

\rev{We are now in the position to prove our first main result:}

\begin{theorem}[Nonuniqueness of Global Minimizers]
\label{th:nonunique}
In the situation of \cref{ass:standing}, 
there always exists a tuple $(y_d, u_d) \in Y \times U$ such that the problem
\ref{eq:problem} possesses more than one global solution.
\end{theorem}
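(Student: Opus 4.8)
The plan is to recast \ref{eq:problem} as a metric projection problem onto the graph of $S$ and then to invoke classical convexity results for Chebyshev sets. To this end, I would first equip the product space $Z := \Y \times \U$ with the norm $\Norm{(y,u)}{Z} := (\Norm{y}{\Y}^p + \Norm{u}{\U}^p)^{1/p}$. Since $\Y$ and $\U$ are uniformly convex and uniformly smooth and since $p \in (1,\infty)$, the $\ell^p$-type direct sum $Z$ is again uniformly convex and uniformly smooth (a classical fact about $\ell^p$-sums for $1 < p < \infty$). With the associated metric $\rho(z,z') := \Norm{z - z'}{Z}$, the objective of \ref{eq:problem} is precisely $\rho((y,u),(y_d,u_d))^p$, so that, after eliminating the constraint $y = S(u)$, the global solutions of \ref{eq:problem} are exactly the nearest points of the tuple $(y_d, u_d) \in Z$ in the set
\begin{linenomath}\begin{equation*}
M := \{(S(u), u) \mid u \in \U\} \subseteq Z.
\end{equation*}\end{linenomath}
Thus, \ref{eq:problem} is uniquely solvable for every $(y_d,u_d)$ if and only if $M$ is a Chebyshev set in the sense of \cref{def:Chebyshev}.

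The second step is to record two structural properties of $M$. First, the weak-to-weak continuity of $S$ from \cref{ass:standing} renders $M$ weakly sequentially closed: if $(S(u_n),u_n) \rightharpoonup (y,u)$ in $Z$, then $u_n \rightharpoonup u$ in $\U$, hence $S(u_n) \rightharpoonup S(u)$ in $\Y$, and the uniqueness of weak limits forces $y = S(u)$, i.e., $(y,u) \in M$. In the reflexive, uniformly convex space $Z$ this weak closedness upgrades to approximative compactness of $M$: any distance-minimizing sequence $\{m_n\} \subseteq M$ for a given $z \in Z$ is bounded and admits a weakly convergent subsequence $m_n \rightharpoonup m \in M$ that remains distance-minimizing by weak lower semicontinuity of the norm; since then $z - m_n \rightharpoonup z - m$ with $\Norm{z-m_n}{Z} \to \Norm{z-m}{Z}$, the Radon--Riesz property of uniformly convex spaces yields $m_n \to m$ in norm. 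Second, $M$ is convex if and only if $S$ is affine-linear: convexity of $M$ means that for all $u_1,u_2 \in \U$ and $t \in [0,1]$ the point $t(S(u_1),u_1) + (1-t)(S(u_2),u_2)$ lies in $M$, which --- comparing the $\U$-components --- is equivalent to $S(tu_1 + (1-t)u_2) = tS(u_1) + (1-t)S(u_2)$, i.e., to affinity of $S$.

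The final step invokes a classical theorem of nonlinear approximation theory (in the spirit of Efimov--Stechkin and Vlasov): in a uniformly convex and smooth Banach space, every approximatively compact Chebyshev set is convex. Since $Z$ is uniformly convex and uniformly smooth (hence smooth) and $M$ is approximatively compact, this theorem in its contrapositive form yields that $M$ --- being non-convex because $S$ is \emph{not} affine-linear by \cref{ass:standing} --- cannot be a Chebyshev set. As approximative compactness already guarantees that every $(y_d,u_d) \in Z$ admits at least one nearest point in $M$ (consistent with \cref{prop:existence}), the failure of the Chebyshev property must manifest itself as non-uniqueness: there exists a tuple $(y_d,u_d)$ possessing two or more nearest points in $M$, i.e., two or more global solutions of \ref{eq:problem}.

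I expect the main obstacle to be the precise invocation of the convexity theorem for Chebyshev sets: one must match the hypotheses (uniform convexity together with smoothness, plus approximative compactness of $M$) to a citable result, since the general question of whether Chebyshev sets are convex is delicate in infinite dimensions. A secondary technical point is to confirm carefully that the $\ell^p$-direct sum $Z$ inherits both uniform convexity and uniform smoothness from $\Y$ and $\U$ for every $p \in (1,\infty)$, which is exactly what makes the Radon--Riesz argument and the approximation-theoretic theorem applicable.
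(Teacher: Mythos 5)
Your argument is correct and follows the same overall architecture as the paper's proof: recast \ref{eq:problem} as a metric projection onto the graph $M$ of $S$ in the $\ell^p$-sum space, observe that $M$ is weakly (sequentially) closed and non-convex, and then contradict a convexity theorem for Chebyshev sets. The one genuine difference lies in the key lemma. The paper invokes \cite[Corollary 4.2]{Klee1961} directly: in a uniformly convex and uniformly smooth Banach space, every \emph{weakly closed} Chebyshev set is convex. You instead route through the Efimov--Stechkin/Vlasov theorem for \emph{approximatively compact} Chebyshev sets in smooth, uniformly convex spaces, and you supply the bridge yourself by upgrading weak closedness to approximative compactness via reflexivity and the Radon--Riesz property. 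Both routes are valid and citable (your version is covered, e.g., in the survey \cite{Fletcher2015} already referenced in the paper); yours costs one extra lemma but makes the mechanism of the convexity theorem more transparent, while the paper's choice of Klee's result lets the weak closedness of $M$ be used without intermediary. Two smaller points where the paper is more careful than you are: first, the uniform convexity and uniform smoothness of the $p$-sum $Z$ is not left as a ``classical fact'' but is proved via \cite[Theorem 1]{Clarkson1936} and a duality argument for the dual norm; second, you assert that convexity of $M$ is ``equivalent to affinity of $S$,'' whereas the paper actually verifies that the segment identity $S(tu_1+(1-t)u_2)=tS(u_1)+(1-t)S(u_2)$ for $t\in[0,1]$ forces $L:=S-S(0)$ to be additive and genuinely homogeneous over all of $\R$ (not just $[0,1]$), and then derives continuity of $L$ from the weak closedness of $M$ via the closed graph theorem, so that the contradiction with \cref{ass:standing} is watertight regardless of whether ``affine-linear'' is read algebraically or topologically. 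Neither omission is fatal, but both steps deserve the few lines the paper spends on them.
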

\begin{proof}
The main idea of the proof is to identify \ref{eq:problem} with a metric projection problem
onto the graph of the control-to-state mapping $S$, i.e., the set
\begin{linenomath}\begin{equation}
\label{eq:Mdef}
M:=
\left \{
(S(u), u) \mid u \in U
\right \} \subset Y \times U
\end{equation}\end{linenomath}
and to subsequently invoke classical results on the convexity of Chebyshev sets. To
pursue this approach, we argue by contradiction.

Assume that, for each tuple $(y_d, u_d) \in Y \times U$, the problem \ref{eq:problem} possesses precisely one global solution
$(\bar y, \bar u) \in Y \times U$.
Then, the monotonicity of the function $[0, \infty) \ni x \mapsto x^{1/p}  \in [0, \infty)$
implies that, for every $(y_d, u_d)$, the unique global minimizer of \ref{eq:problem} is also the
sole solution of the problem
\begin{linenomath}\begin{equation}
\label{eq:projprob}
\min_{(y, u) \in M}
\Norm{(y, u) - (y_d, u_d)}{\Y \times \U},
\end{equation}\end{linenomath}
where $M$ is the set in \eqref{eq:Mdef} and where $\Norm{\cdot}{\Y \times \U}$ is the norm on 
$Y \times U$ defined by
\begin{linenomath}\begin{equation}
\label{eq:YxUnormdef}
\Norm{(y, u)}{\Y \times \U}
:=
\left ( 
\Norm{y }{\Y}^p +  \Norm{u }{\U}^p 
\right )^{1/p}
\qquad \forall (y,u) \in Y \times U.
\end{equation}\end{linenomath}
Note that the space $Y \times U$ endowed with the norm $\Norm{\cdot}{\Y \times \U}$ is trivially Banach, and that
\cite[Theorem 1]{Clarkson1936} and our assumptions on $Y$ and $U$ imply that $(Y \times U, \Norm{\cdot}{\Y \times \U})$
is uniformly convex. Further, the space $(Y \times U, \Norm{\cdot}{\Y \times \U})$ is also uniformly smooth.
Indeed, from \cite[Theorem 5.5.12]{Megginson1998}, we obtain that the uniform smoothness of the
spaces $(\Y, \Norm{\cdot}{\Y} )$ and $(\U, \Norm{\cdot}{\U})$ is equivalent to the uniform convexity of the duals
$(\Y^*, \Norm{\cdot}{\Y^*} )$ and $(\U^*, \Norm{\cdot}{\U^*})$, and,  
using a standard calculation, it is easy to check that the dual
of $(Y \times U, \Norm{\cdot}{\Y \times \U})$ is isometrically isomorphic to the product space
$Y^* \times U^*$ 
endowed with the norm
\begin{linenomath}\begin{equation*}
\Norm{(y^*, u^*)}{\Y^* \times \U^*}
:=
\left ( 
\Norm{y^* }{\Y^*}^{p/(p-1)} +  \Norm{u^* }{\U^*}^{p/(p-1)}
\right )^{(p-1)/p}
\qquad \forall (y^*,u^*) \in Y^* \times U^*.
\end{equation*}\end{linenomath}
In combination with \cite[Theorem 1]{Clarkson1936}, the above implies that 
$(Y^* \times U^*, \Norm{\cdot}{\Y^* \times \U^*})$ 
is uniformly convex and, by  \cite[Proposition 5.2.7 and Theorem 5.5.12]{Megginson1998},
that the space $(Y \times U, \Norm{\cdot}{\Y \times \U})$ is uniformly smooth as claimed.

Taking into account all of the above and the structure of the problem \eqref{eq:projprob}, we
may conclude that, in the considered situation and under the assumption that the
problem \ref{eq:problem} is uniquely solvable for all $(y_d, u_d) \in Y \times U$, the metric projection in
the uniformly convex and uniformly smooth Banach space  $(Y \times U, \Norm{\cdot}{\Y \times \U})$ onto the
set $M$ defined in \eqref{eq:Mdef} is well-defined and single-valued everywhere. In other words,
$M$ is a Chebyshev set in $(Y \times U, \Norm{\cdot}{\Y \times \U})$ \rev{in the sense of \cref{def:Chebyshev}}. From the
weak-to-weak continuity of the control-to-state mapping $S$, we further obtain that
every sequence $\{(y_n, u_n)\}_{n \in \mathbb{N}} \subset M$ 
that converges weakly in $Y \times U$ to some $(\tilde y, \tilde u)$
has to satisfy
\begin{linenomath}\begin{equation*}
\tilde y \xleftharpoonup{n\rightarrow\infty} y_{n} = S(u_{n}) \xrightharpoonup{n\rightarrow\infty} S(\tilde u).
\end{equation*}\end{linenomath}
The set $M$ is thus not only Chebyshev but also weakly closed and we may invoke
\cite[Corollary 4.2]{Klee1961} to deduce that $M$ has to be convex, i.e., we have
\begin{linenomath}\begin{equation}
\label{eq:convexityrel}
\lambda (y_1, u_1) + (1-\lambda) (y_2, u_2) = ( \lambda S(u_1) + (1-\lambda) S(u_2), \lambda u_1 + (1-\lambda) u_2 ) \in M
\end{equation}\end{linenomath}
for all $\lambda \in [0, 1]$ and all $(y_1, u_1), (y_2, u_2) \in M$. Due to the definition 
of $M$, \eqref{eq:convexityrel} can only be true if 
\begin{linenomath}\begin{equation}
\label{eq:randomeq2635}
 S( \lambda u_1 + (1-\lambda) u_2 ) = \lambda S(u_1) + (1-\lambda) S(u_2)
\end{equation}\end{linenomath}
holds for all $\lambda \in [0, 1]$ and all $u_1, u_2 \in U$.
This property, however, implies in combination with our assumptions on $S$
that the map $L(\cdot) := S(\cdot) - S(0)$
is linear and continuous as a function from $U$ to $Y$. Indeed, 
for every arbitrary but fixed $u \in U$, \eqref{eq:randomeq2635} yields
\begin{linenomath}\begin{equation*}
L(\alpha u) = S(\alpha u + (1 - \alpha)0) - S(0) = \alpha S(u) - \alpha S(0) = \alpha L(u)\quad \forall \alpha \in [0, 1]
\end{equation*}\end{linenomath}
and 
\begin{linenomath}\begin{equation*}
\alpha L(u) = \alpha L\left ( \frac1\alpha \alpha u \right ) = L(\alpha u)\quad \forall \alpha \in (1, \infty).
\end{equation*}\end{linenomath}
From these equations, it readily follows that 
\begin{linenomath}\begin{equation*}
\begin{aligned}
L(u_1+u_2) &= S\left (\frac{1}{2} (2u_1) + \frac{1}{2} (2u_2) \right ) - S(0) 
= \frac{1}{2} S(2u_1) + \frac{1}{2} S(2u_2) - S(0) 
\\
&= \frac{1}{2} L(2u_1) + \frac{1}{2} L(2u_2) 
= L(u_1) + L(u_2)\qquad \forall u_1, u_2 \in U. 
\end{aligned}
\end{equation*}\end{linenomath}
In particular, $L(-u) = -L(u)$ for all $u \in U$, and we may conclude that 
\begin{linenomath}\begin{equation*}
L(\alpha u_1 + u_2) = L(\alpha u_1) + L(u_2) = \alpha L(u_1) + L(u_2) \qquad \forall u_1, u_2 \in U\quad \forall \alpha \in \R.
\end{equation*}\end{linenomath}
The function $L\colon U \to Y$ is thus linear as claimed and, since the weak closedness of the set $M$
immediately yields the closedness of the graph of $L$ in $Y \times U$, also continuous
by the closed graph theorem, see, e.g., \cite[Section II-6]{Yosida1980}.

In summary, we now arrive at the conclusion that the map $S$ has to be an affine-linear function. 
This contradicts our standing assumptions and establishes that 
\ref{eq:problem} cannot possess precisely one solution for all $(y_d, u_d) \in Y \times U$.
As we already know that \ref{eq:problem} possesses at least one solution for each $(y_d, u_d)$ by \Cref{prop:existence},
the assertion of the theorem now follows immediately.
\end{proof}

Next, we address the issue of instability: 

\begin{theorem}[Nonexistence of a Continuous Selection of Minimizers]
\label{th:nonstable}
In the situation of \cref{ass:standing}, 
there always exist a tuple $(y_d, u_d) \in Y \times U$, 
sequences  $\{(y_{d,n}, u_{d,n})\}_{n \in \mathbb{N}} \subset Y \times U$
and $\{(y_{d,n}', u_{d,n}')\}_{n \in \mathbb{N}} \subset Y \times U$,
and elements $(\bar y, \bar u) \in Y \times U$ and  $(\bar y', \bar u') \in Y \times U$
such that the following is true:
\begin{enumerate}[label=(\roman*)]
\item $\{(y_{d,n}, u_{d,n})\}_{n \in \mathbb{N}}$ and $\{(y_{d,n}', u_{d,n}')\}_{n \in \mathbb{N}}$ 
converge strongly in $Y \times U$ to $(y_d, u_d)$,
\item \rev{$(\bar y,\bar u)$ is the unique solution of the problem 
\hyperref[eq:problem]{\textup{P($y_{d,n}$,$u_{d,n}$)}} for all $n \in \mathbb{N}$, i.e.,} 
\begin{linenomath}\begin{equation*}
\{(\bar y, \bar u)\} =  \argmin_{(y,u) \in Y \times U, \, y = S(u)} \Norm{y-y_{d,n}}{\Y}^p +  \Norm{u-u_{d,n}}{\U}^p \qquad \forall n \in \mathbb{N},
\end{equation*}\end{linenomath}
\item  \rev{$(\bar y',\bar u')$ is the unique solution of the problem 
\hyperref[eq:problem]{\textup{P($y_{d,n}'$,$u_{d,n}'$)}} for all $n \in \mathbb{N}$, i.e.,} 
\begin{linenomath}\begin{equation*}
\{(\bar y', \bar u')\} =  \argmin_{(y,u) \in Y \times U, \, y = S(u)} \Norm{y-y_{d,n}'}{\Y}^p +  \Norm{u-u_{d,n}'}{\U}^p \qquad \forall n \in \mathbb{N},
\end{equation*}\end{linenomath}
\item $(\bar y, \bar u) \neq (\bar y', \bar u')$.
\end{enumerate}
\end{theorem}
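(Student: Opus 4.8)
The plan is to reuse the metric-projection identification established in the proof of \cref{th:nonunique} and to combine it with the strict convexity of the product norm $\Norm{\cdot}{\Y \times \U}$. Recall that, by the monotonicity of $x \mapsto x^{1/p}$, solving \ref{eq:problem} for a parameter tuple $(y_d, u_d)$ amounts to computing the metric projection of $(y_d, u_d)$ onto the weakly closed set $M = \{(S(u), u) \mid u \in U\}$ in the uniformly convex and uniformly smooth Banach space $(Y \times U, \Norm{\cdot}{\Y \times \U})$, and that the two problems have identical sets of minimizers. Since $S$ is not affine-linear, the argument in the proof of \cref{th:nonunique} shows that $M$ is not Chebyshev; as every point already possesses at least one nearest element by \cref{prop:existence}, this yields a tuple admitting \emph{two} distinct nearest points. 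I take this tuple as the common limit $(y_d, u_d)$, denote the two distinct nearest points by $(\bar y, \bar u) \neq (\bar y', \bar u')$ in $M$, and set $r := \Norm{(y_d, u_d) - (\bar y, \bar u)}{\Y \times \U} = \Norm{(y_d, u_d) - (\bar y', \bar u')}{\Y \times \U} > 0$, the distance of $(y_d, u_d)$ to $M$.

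Next I would construct the two sequences by moving from $(y_d, u_d)$ slightly toward each of the two nearest points. Concretely, I choose $t_n \in (0,1)$ with $t_n \to 0$ and define
\begin{equation*}
(y_{d,n}, u_{d,n}) := (1 - t_n)(y_d, u_d) + t_n (\bar y, \bar u), \qquad (y_{d,n}', u_{d,n}') := (1 - t_n)(y_d, u_d) + t_n (\bar y', \bar u').
\end{equation*}
Both sequences converge strongly in $Y \times U$ to $(y_d, u_d)$, which settles item (i), and $(\bar y, \bar u) \neq (\bar y', \bar u')$ gives item (iv) for free.

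The crux of the argument --- and the step I expect to be the main obstacle --- is to show that $(\bar y, \bar u)$ is the \emph{unique} nearest point of $(y_{d,n}, u_{d,n})$ in $M$ for every $n$ (and, symmetrically, that $(\bar y', \bar u')$ is the unique nearest point of $(y_{d,n}', u_{d,n}')$). For the lower bound I would use, for arbitrary $m \in M$, the estimate $\Norm{(y_{d,n}, u_{d,n}) - m}{\Y \times \U} \ge \Norm{(y_d, u_d) - m}{\Y \times \U} - \Norm{(y_d, u_d) - (y_{d,n}, u_{d,n})}{\Y \times \U} \ge r - t_n r = (1 - t_n) r$; combined with the identity $\Norm{(y_{d,n}, u_{d,n}) - (\bar y, \bar u)}{\Y \times \U} = (1 - t_n) r$, this already shows that $(\bar y, \bar u)$ is a nearest point. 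Uniqueness is where uniform convexity enters: it implies strict convexity, and hence equality in the triangle inequality $\|a + b\| = \|a\| + \|b\|$ with $a, b \neq 0$ forces $a$ and $b$ to be positive multiples of one another. Any $m \in M$ realizing equality throughout the chain above must therefore satisfy both $\Norm{(y_d, u_d) - m}{\Y \times \U} = r$ and the proportionality of $(y_d,u_d) - m$ with $(y_d,u_d) - (y_{d,n}, u_{d,n})$; a short computation along the line through $(y_d, u_d)$ and $(\bar y, \bar u)$ then pins down $m = (\bar y, \bar u)$, yielding uniqueness.

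Finally, I would translate the result back: because \ref{eq:problem} and the projection problem \eqref{eq:projprob} share the same minimizers, the uniqueness of the nearest point gives items (ii) and (iii), i.e.\ that $(\bar y, \bar u)$ is the unique solution of \textup{P($y_{d,n}$,$u_{d,n}$)} and $(\bar y', \bar u')$ the unique solution of \textup{P($y_{d,n}'$,$u_{d,n}'$)} for all $n$. Together with (i) and (iv), this completes the proof. I note that only the strict convexity of $\Norm{\cdot}{\Y \times \U}$ is genuinely needed in this final step; uniform smoothness and the full strength of uniform convexity entered earlier, through \cref{th:nonunique}, to produce the doubly-covered point $(y_d, u_d)$ in the first place.
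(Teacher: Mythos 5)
Your proposal is correct and follows essentially the same route as the paper: identify \ref{eq:problem} with the metric projection onto $M$, take the doubly-solvable tuple from \cref{th:nonunique}, move along the segments toward each of the two nearest points, and use strict convexity to get uniqueness of the projection at the perturbed parameters. The only difference is that you carry out the triangle-inequality/strict-convexity computation explicitly, whereas the paper delegates exactly this step to \cite[Theorem 2.1]{Kainen2000}.
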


\begin{proof}
In the considered situation, we obtain 
from exactly the same arguments as in the proof of \cref{th:nonunique}
that \ref{eq:problem} is equivalent to the projection problem \eqref{eq:projprob}
and from \cref{th:nonunique} itself that there exists a tuple  $(y_d, u_d) \in Y \times U$
such that \ref{eq:problem} (and thus also \eqref{eq:projprob}) 
possesses two nonidentical global solutions $(\bar y, \bar u) \in Y \times U$ and $(\bar y', \bar u') \in Y \times U$.
Define 
\begin{linenomath}\begin{equation*}
(y_{d,t}, u_{d,t}) := t(\bar y, \bar u) +  (1 - t)(y_d, u_d)\qquad \forall t \in (0, 1)
\end{equation*}\end{linenomath}
and
\begin{linenomath}\begin{equation*}
(y_{d,t}', u_{d,t}') := t(\bar y', \bar u') + (1 - t)(y_d, u_d)\qquad \forall t \in (0, 1).
\end{equation*}\end{linenomath}
Then, the uniform convexity of the space 
$(Y \times U, \Norm{\cdot}{\Y \times \U})$ (with $\Norm{\cdot}{\Y \times \U}$ defined as in \eqref{eq:YxUnormdef}, 
see again \cite[Theorem 1]{Clarkson1936}) and exactly the same calculations 
as in the proof of \cite[Theorem 2.1]{Kainen2000} yield that 
\begin{linenomath}\begin{equation*}
\{(\bar y, \bar u)\} = 
\argmin_{(y, u) \in M}
\Norm{(y, u) - (y_{d,t}, u_{d,t})}{\Y \times \U}\qquad \forall t \in (0, 1)
\end{equation*}\end{linenomath}
and 
\begin{linenomath}\begin{equation*}
\{(\bar y', \bar u')\} = 
\argmin_{(y, u) \in M}
\Norm{(y, u) - (y_{d,t}', u_{d,t}')}{\Y \times \U}\qquad \forall t \in (0, 1)
\end{equation*}\end{linenomath}
holds, where $M$ is the set in \eqref{eq:Mdef}.
To establish the assertion of the theorem, it now suffices to
choose an arbitrary sequence $\{t_n\}_{n \in \mathbb{N}} \subset (0, 1)$ with $t_n \to 0$,
to define  $(y_{d,n}, u_{d,n}) :=  (y_{d,t_n}, u_{d,t_n})$ and $(y_{d,n}', u_{d,n}') :=  (y_{d,t_n}', u_{d,t_n}')$
for all $n \in \mathbb{N}$,
and to again exploit the equivalence between the problems \ref{eq:problem} and \eqref{eq:projprob}.
\end{proof}

Some remarks regarding the last two results are in order: 

\begin{remark}\label{rem:comments}~
\begin{enumerate}[label=(\roman*)]

\item 
The assumption that both the desired state $y_d$ and the desired control $u_d$ can be chosen 
at will in \cref{th:nonunique} cannot be dropped. If, e.g., $u_d$ is fixed to be zero, 
then it is perfectly possible that a problem of the type \ref{eq:problem} is
uniquely solvable for all $y_d \in Y$ even if the control-to-state mapping $S$ is non-affine. 
An example of such a configuration can be found in \cite[Corollary~5.3]{ChristofSSC2020}. 

\item 
The nonuniqueness of global minimizers in \cref{th:nonunique} implies that 
numerical solution algorithms for problems of the type \ref{eq:problem}
may produce sequences of iterates with several accumulation points
and that termination criteria which 
consider the distance between successive iterates cannot be expected to reliably detect stationarity.
The instability of the solutions in \cref{th:nonstable} further shows that 
numerical errors and small inaccuracies in the problem data may prevent a proper identification of a global optimum.

\item 
\Cref{th:nonstable} shows that, in the situation of \cref{ass:standing}, every function $F\colon Y \times U \to U$
with the property 
\begin{linenomath}\begin{equation*}
\qquad F(y_d, u_d) \in \argmin_{u \in U} \ \Norm{S(u)-y_d}{\Y}^p +  \Norm{u-u_d}{\U}^p  \quad \forall (y_d, u_d) \in Y \times U
\end{equation*}\end{linenomath}
is discontinuous. There thus does not exist a continuous selection from the set of optimal 
controls of \ref{eq:problem} (in the sense of set-valued analysis, cf.\ \cite{Brown1989}). \Cref{th:nonstable} further illustrates that, 
in the presence of nonlinearity, adding a Tikhonov-type regularization term to an objective function
may fail to properly regularize an inverse problem. 

\item \rev{If, instead of \ref{eq:problem}, we consider a problem of the form 
\begin{linenomath}\begin{equation}\tag*{\textup{P($y_d$,$u_d$,$\nu$)}}
\label{eq:problem2}
\min_{(y,u) \in \Y \times \U}\ \Norm{y-y_d}{\Y}^p +  \nu \Norm{u-u_d}{\U}^p 
\quad
\text{\emph{s.t.} } y = S(u)
\end{equation}\end{linenomath}
with some $(\Y, \Norm{\cdot}{\Y} )$, $(\U, \Norm{\cdot}{\U})$, $p \in (1, \infty)$, $y_d \in \Y$, $u_d \in \U$,
and $S\colon\U \to \Y$ as in \cref{ass:standing} and a regularization parameter $\nu > 0$, then, by redefining the norm on $U$, 
this problem can be recast in the form \ref{eq:problem} and the results in 
\cref{th:nonunique,th:nonstable} carry over immediately, cf.\ \cref{ex:1,ex:2,ex:3,ex:4}.
We remark that, if a problem of the type \ref{eq:problem2} is given
that possesses more than one global solution for a certain triple $(y_d, u_d, \nu)$, 
then it is not always possible to remove this nonuniqueness 
by driving the regularization parameter $\nu$ to infinity. Such effects occur, for instance, in the presence of symmetries 
as one may easily check by means of the prototypical example
\begin{linenomath}\begin{equation*}
\min_{y \in \R, \, u \in \R} (y - 1)^2 + \nu u^2 \\
\quad
\text{\emph{s.t.} } y = |u|, 
\end{equation*}\end{linenomath}
which is clearly of the form \ref{eq:problem2} with $Y :=U:=\R$, $\Norm{\cdot}{\Y}  := \Norm{\cdot}{\U}  := |\cdot |$, $p:=2$,
$y_d := 1$, $u_d := 0$, and $S(u) := |u|$ and which possesses the two optimal controls $\bar u_1 := -(1 + \nu)^{-1}$
and $\bar u_2 := (1 + \nu)^{-1}$ for all $\nu > 0$. 
}
\end{enumerate}
\end{remark}

\section{\rev{Tangible Examples}}
\label{sec:3}
\rev{To illustrate that \cref{th:nonunique,th:nonstable} can be applied to a broad range of 
tracking-type optimal control problems, we next discuss some tangible examples.}
(Note that the following list is far from exhaustive.)

\begin{example}[Finite-Dimensional Tracking-Type Problems]
\label{ex:1}
Consider a finite-dimensional optimization problem of the form
\begin{linenomath}\begin{equation}
\label{eq:exprob1}
\min_{y \in \R^l, \, u \in \R^m} \frac{1}{2} (y - y_d)^T A (y-y_d) + \frac{\nu}{2} (u - u_d)^T B (u - u_d) \\
\quad
\text{\emph{s.t.} } y = S(u)
\end{equation}\end{linenomath}
with some $l, m \in \mathbb{N}$, an arbitrary but fixed Tikhonov parameter $\nu > 0$,
symmetric positive definite matrices $A \in \R^{l \times l}$ and $B \in \R^{m \times m}$,
vectors $y_d \in \R^l$ and $u_d \in \R^m$,
and a non-affine, continuous mapping $S\colon \R^m \to \R^l$. Then, by defining
\begin{linenomath}\begin{equation*}
Y := \R^l,\quad 
 \Norm{y}{Y} := \left ( \frac{1}{2}y^T A y\right )^{1/2},
\quad
U:= \R^m,
\quad
 \Norm{u}{U} := \left ( \frac{\nu}{2}u^T B u\right )^{1/2},
\quad p := 2,
\end{equation*}\end{linenomath}
we can recast \eqref{eq:exprob1} as a problem of the form \ref{eq:problem} that 
satisfies all of the conditions in \cref{ass:standing} (as one may easily check). \Cref{th:nonunique,th:nonstable} are thus applicable
to \eqref{eq:exprob1}, 
and we may deduce that there exist choices of the tuple $(y_d, u_d)$ for which \eqref{eq:exprob1} 
possesses more than one global solution and that the solution set of \eqref{eq:exprob1} does not admit a continuous selection.
Note that problems of the type \eqref{eq:exprob1} arise very frequently in optimal control 
when a continuous tracking-type problem is discretized, e.g., by means of finite elements,
cf.\ \cite[Section 5.1]{Christof2018} and \cite[Sections 4.3, 5.3]{Hafemeyer2020PhD}.
\end{example}

\begin{example}[Optimal Control of a Nonsmooth Semilinear Elliptic PDE]
\label{ex:2}
Consider an optimal control problem of the form 
\begin{linenomath}\begin{equation}
\label{eq:ex2}
\begin{aligned}
&\mathrm{min} &&\frac{1}{2} \Norm{y - y_d }{L^2(\Omega)}^2 
+ \frac{\nu}{2} \Norm{u - u_d }{L^2(\Omega)}^2 \\
&\mathrm{w.r.t.} &&y \in H_0^1(\Omega),\quad u \in L^2(\Omega),
\\
&\mathrm{\,s.t. } && - \Delta y + \max(0, y) = u \text{ in } \Omega,
\end{aligned}
\end{equation}\end{linenomath}
where $\Omega \subset \R^m$, $m \in \mathbb{N}$, is a bounded domain, 
$y_d \in L^2(\Omega)$ and $u_d \in L^2(\Omega)$ are given,
${\nu > 0}$ is an arbitrary but fixed Tikhonov parameter, $L^2(\Omega)$ and $H_0^1(\Omega)$ are defined as in \cite{Attouch2006}, 
$\Delta$ is the distributional Laplacian, and the function 
$\max(0, \cdot) \colon \R \to \R$ acts as a Nemytskii operator. 
Then, it follows from \cite[Proposition 2.1, Corollary~3.8]{Christof2018}
that \eqref{eq:ex2} possesses a well-defined and 
weak-to-weak continuous control-to-state mapping
$S\colon L^2(\Omega) \to L^2(\Omega)$, $u \mapsto y$.
Further, the map $S$ is also non-affine. Indeed, 
if we choose an arbitrary but fixed $z \in H_0^1(\Omega) \cap H^2(\Omega)$ that is 
positive almost everywhere in $\Omega$ (such a $z$ exists by \cite[Lemma A.1]{Christof2018})
and if we define
\begin{linenomath}\begin{equation*}
u_1 := 2(-\Delta z + z) \in L^2(\Omega)\qquad
\text{and}
\qquad
u_2 := 2\Delta z \in L^2(\Omega),
\end{equation*}\end{linenomath}
then we clearly have $S(u_1) = 2z$, $S(u_2) = -2z$, and 
\begin{linenomath}\begin{equation*}
\frac12 S(u_1) + \frac12 S(u_2) = 0 \neq S(z) = S\left (\frac12 u_1 + \frac12 u_2\right ),
\end{equation*}\end{linenomath}
where the inequality $0 \neq S(z)$ follows immediately from the PDE in \eqref{eq:ex2}
and our assumption $z > 0$ a.e.\ in $\Omega$. Since
\eqref{eq:ex2} can be recast as a problem of
the form \ref{eq:problem} (with $Y := L^2(\Omega)$, $U := L^2(\Omega)$, $p:= 2$,
and appropriately rescaled norms)
and since Hilbert spaces 
are trivially uniformly convex and uniformly smooth,
we may now conclude that 
the optimal control problem \eqref{eq:ex2} satisfies all of the conditions in \cref{ass:standing}. 
\Cref{th:nonunique,th:nonstable}
are thus applicable and it follows that 
\eqref{eq:ex2} is not uniquely solvable for certain choices of the tuple 
$(y_d, u_d) \in L^2(\Omega) \times L^2(\Omega)$ and
that the solution set of \eqref{eq:ex2} does not admit a continuous selection. Note that the above setting is precisely 
the one considered in \cite{Christof2018}.
\end{example}

\begin{example}[$\boldsymbol{L^p}$-Boundary Control for a Signorini-Type VI]
\label{ex:3}
Consider an optimal control problem of the form
\begin{linenomath}\begin{equation}
\label{eq:ex3}
\begin{aligned}
&\mathrm{min} &&\frac{1}{p} \Norm{y - y_d }{L^p(\Omega)}^p 
+ \frac{\nu}{p} \Norm{u - u_d }{L^p(\partial \Omega)}^p \\
&\mathrm{w.r.t.} &&y \in H^1(\Omega),\quad u \in L^p(\partial \Omega),
\\
&\mathrm{\,s.t. } && y \in K,\quad 
\int_\Omega \nabla y \cdot \nabla (v - y) + y (v - y)\mathrm{d}x \geq \int_{\partial \Omega} u (v - y) \mathrm{d}s\quad \forall v \in K,
\end{aligned}
\end{equation}\end{linenomath}
where $\Omega \subset \R^m$, $m \in \mathbb{N}$, is a bounded Lipschitz domain with boundary $\partial\Omega$, 
${\nu > 0}$ is an arbitrary but fixed Tikhonov parameter,
${y_d \in L^p(\Omega)}$ and $u_d \in L^p(\partial \Omega)$ are given,
$p$ is an exponent that satisfies $p \in [2, \infty)$ for $m \leq 2$ and 
$p \in [2, 2m/(m-2)]$ for $m \geq 3$,
$L^p(\partial \Omega)$, $L^p(\Omega)$, and $H^1(\Omega)$ are defined as in \cite{Attouch2006}, 
$\nabla$ is the weak gradient, and $K$ is the set
of all elements of $H^1(\Omega)$ whose trace is nonnegative a.e.\ on $\partial \Omega$.
Then, using \cite[Theorem II-2.1]{KinderlehrerStampacchia1980}, the Sobolev embeddings, see 
\cite[Theorem~2-3.4]{Necas2012}, and the 
compactness of the trace operator, see \cite[Theorem~2-6.2]{Necas2012},
it is easy to check that the elliptic variational inequality in \eqref{eq:ex3} possesses a well-defined 
and weak-to-weak continuous solution operator
$S\colon L^p(\partial\Omega)\to H^1(\Omega) \hookrightarrow L^p(\Omega)$, $u \mapsto y$.
To see that this $S$ is non-affine, we note that, for every a.e.-positive control $u \in L^p(\partial \Omega)$,
the trace of $S(u)$ has to be positive a.e.\ on a set of positive surface measure. 
Indeed, if the latter was not the case for an a.e.-positive control $u$,
then the variational inequality in \eqref{eq:ex3} and the inclusion $H_0^1(\Omega) \subset K$ 
would imply that $y = S(u) \in H^1(\Omega)$
is also the solution of 
\begin{linenomath}\begin{equation*}
-\Delta y + y = 0 \text{ in } \Omega,\qquad y = 0 \text{ on }  \partial \Omega.
\end{equation*}\end{linenomath}
This, however, would yield $y = 0$ and, as a consequence, 
\begin{linenomath}\begin{equation*}
0  \geq \int_{\partial \Omega} u v\,\mathrm{d}s = \int_{\partial \Omega}| u v|\,\mathrm{d}s\quad \forall v \in K
\end{equation*}\end{linenomath}
which is a contradiction. The trace of $S(u)$ thus has to be positive on a non-negligible 
subset of $\partial \Omega$ for all a.e.-positive $u \in L^p(\partial \Omega)$ as claimed. 
Since we trivially have $S(0) = 0$ and since $S(u)$ has to be an element of $K$ for all $u$ by the definition of $S$,
it now follows immediately that $S(u) + S(-u) \neq S(0)$ holds for all $u \in L^p(\partial \Omega)$
that are positive a.e.\ on $\partial \Omega$. In combination with our previous 
observations on $S$ and the fact that $L^q$-spaces are uniformly convex and 
uniformly smooth for $1 < q < \infty$ (see \cite[Theorems 5.2.11, 5.5.12]{Megginson1998}), 
this shows that \eqref{eq:ex3} satisfies 
the conditions in \cref{ass:standing} (with $Y := L^p(\Omega)$, $U := L^p(\partial \Omega)$,
and again appropriately rescaled norms). We may thus again invoke 
\Cref{th:nonunique,th:nonstable} to deduce that \eqref{eq:ex3} 
is not uniquely solvable for certain tuples $(y_d, u_d) \in L^p(\Omega) \times L^p(\partial \Omega)$
and that the solution set of \eqref{eq:ex3} does not admit a continuous selection. 
\end{example}

\begin{example}[Distributed Control of the Parabolic Obstacle Problem]
\label{ex:4}
Consider an optimal control problem of the form
\begin{linenomath}\begin{equation}
\label{eq:ex4optprob}
\begin{aligned}
		&\mathrm{min} &&\frac{1}{2} \Norm{y(T) - y_d }{L^2(\Omega)}^2 + \frac{\nu}{2} \Norm{u - u_d }{L^2(0, T; L^2(D))}^2 \\
		&\mathrm{w.r.t.} &&y \in L^2(0, T;H_0^1(\Omega)) \cap H^1(0, T;L^2(\Omega)),\quad u \in L^2(0, T; L^2(D)),
\end{aligned}
\end{equation}\end{linenomath}
\rev{that is governed by an evolution variational inequality of the type }
\rev{\begin{linenomath}\begin{equation}
\label{eq:ex4obsprob-def}
\begin{aligned}
	&y \in L^2(0, T;H_0^1(\Omega)) \cap H^1(0, T;L^2(\Omega)),\\
	&y(0) = 0 \text{ a.e.\ in }\Omega,\qquad  y(t) \geq \psi \text{ a.e.\ in }\Omega \text{ for a.a.\ } t \in (0, T),\\
	& 
	\int_0^T \left  \langle  \partial_t y - \Delta y - Bu, v - y \right\rangle  \mathrm{d}t \geq 0 \\
	 &\forall v \in L^2(0, T; H_0^1(\Omega)), \ \ v(t) \geq \psi \text{ a.e.\ in }\Omega 
	\text{ for a.a.\ } t \in (0, T).
\end{aligned}
\end{equation}\end{linenomath}}%
Here, $\Omega \subset \R^m$, $m \in \mathbb{N}$, is supposed to be a bounded domain, 
$D$ is a nonempty, open subset of $\Omega$, $T>0$ is a given final time, $\nu > 0$
is an arbitrary but fixed Tikhonov \mbox{parameter,}
the appearing Lebesgue-, Sobolev-, and Bochner spaces are defined as in \cite{Attouch2006} and \cite{Heinonen2015},
$y_d \in L^2(\Omega)$ and $u_d \in L^2(0, T;L^2(D))$ are given, 
$\psi \in L^2(\Omega)$ is a given function  that satisfies $\psi \leq 0$ a.e.\ in $\Omega$,
$\partial_t$ is the time derivative in the Sobolev-Bochner sense, 
$\Delta$ is the distributional Laplacian, $B$ denotes the canonical embedding of 
$L^2(0, T;L^2(D))$ into $L^2(0, T;L^2(\Omega))$ obtained from an extension by zero, 
and $\left \langle \cdot, \cdot \right \rangle$
denotes the dual pairing in $H_0^1(\Omega)$. 
Then, using \cite[Theorem~1.13, Equation (1.70)]{Barbu1984}, \cite[Theorem~2.3]{Christof2019},
and the lemma of Aubin-Lions, see \cite[Theorem 10.12]{Schweizer2013},
it is easy to check that the variational inequality in \rev{\eqref{eq:ex4obsprob-def}} possesses 
a well-defined, weak-to-weak continuous solution map
$G \colon L^2(0, T;L^2(D)) \to H^1(0, T;L^2(\Omega))$, $u \mapsto y$.
(Note that, in order to apply \cite[Theorem~1.13]{Barbu1984}, one has to define 
the function $\Phi$ appearing in this theorem as in \cite[Equation~(4.9)]{Barbu1984}.)
As $H^1(0, T;L^2(\Omega))$ embeds continuously into $C([0, T];L^2(\Omega))$
by \cite[Theorem 10.9]{Schweizer2013}, 
the above implies that 
 \rev{\eqref{eq:ex4optprob}}
possesses a well-defined, weak-to-weak continuous control-to-state 
(or, in this context, more precisely control-to-observation)
operator 
${S\colon L^2(0,T;L^2(D)) \to L^2(\Omega)}$, $u \mapsto G(u)(T)$, where 
$G(u)(T)$ denotes the value of 
the $C([0, T];L^2(\Omega))$-representative of $G(u)$ at the final time $T$. 
To see that the map $S$ is non-affine, we proceed similarly to \cref{ex:2,ex:3}.
Suppose that $E$ is an open, nonempty set whose closure is contained in $D$,
and that $\varepsilon \in (0, T)$ is fixed. Then, it follows from \cite[Lemma A.1]{Christof2018}
that there exists a function $z \in C_c^\infty( (0, T] \times \Omega)$ that is 
positive in $(\varepsilon, T] \times E$ and zero everywhere in $(0, T] \times \Omega \setminus (\varepsilon, T] \times E$.
If, for such a $z$, we define $\tilde u := (\partial_t z - \Delta z)|_{(0, T) \times D}$, where the vertical bar 
denotes a restriction, then it clearly holds $S(\tilde u) = z(T) > 0$ a.e.\ in $E$.  
From the $C([0, T];L^2(\Omega))$-regularity and the properties of the solutions of  \rev{\eqref{eq:ex4obsprob-def}}
and
the closedness of the set $\{v \in L^2(\Omega) \mid v \geq \psi \text{ a.e.\ in }\Omega\}$ in $L^2(\Omega)$,
we further obtain that $S(\alpha \tilde u) \geq \psi$ has to hold a.e.\ in $\Omega$ for all $\alpha \in \R$. 
In combination with the trivial identity $S(0) = 0$ and $z(T) > 0$ a.e.\ in $E$, it now follows immediately
that
\begin{linenomath}\begin{equation*}
\psi \leq S(\alpha \tilde u)  = S(\alpha \tilde u) - S(0) = \alpha (S(\tilde u) - S(0)) 
=
\alpha S(\tilde u)
=
\alpha z(T)
\end{equation*}\end{linenomath}
cannot be true a.e.\ in $\Omega$ for all $\alpha \in \R$.
This shows that the map $S$ is indeed non-affine 
in the situation of \rev{\eqref{eq:ex4optprob} and \eqref{eq:ex4obsprob-def}}. In summary, we may now again conclude that 
\rev{\eqref{eq:ex4optprob}} satisfies all of the conditions in \cref{ass:standing} (with $p:=2$, $Y := L^2(\Omega)$, $U := L^2(0, T;L^2(D))$, 
and appropriately rescaled norms). \Cref{th:nonunique,th:nonstable} thus apply to \rev{\eqref{eq:ex4optprob}},
and we obtain that this optimal control problem is 
not uniquely solvable for certain choices of the tuple $(y_d, u_d) \in L^2(\Omega) \times L^2(0, T;L^2(D))$
and that the solution set of \rev{\eqref{eq:ex4optprob}} does not admit a continuous selection. 
\end{example}

\rev{Note that, for $\nu \to 0$, the optimal control problem considered in \cref{ex:4}
approaches -- at least formally -- a problem of endpoint controllability for the system \eqref{eq:ex4obsprob-def}, 
cf.\ \cite{Loheac2017,Zuazua2001}.
We would like to emphasize in this context that, in \cref{ex:4}, the 
inequality $y \geq \psi$ is not a state constraint but a part of the evolution 
variational inequality that defines the mapping ${S\colon L^2(0,T;L^2(D)) \to L^2(\Omega)}$.
In particular, all controls $u \in L^2(0,T;L^2(D))$ are admissible in \eqref{eq:ex4optprob}. For a detailed discussion of the 
differences between optimal control problems with pointwise state constraints and 
optimal control problems governed by variational inequalities with unilateral constraint sets,
we refer the reader to \cite[Section 1]{Kunisch2012}.}

\section{\rev{Concluding Remarks}}
\label{sec:4}
\rev{As we have seen in this paper, 
for optimal control problems \ref{eq:problem} in uniformly convex and uniformly smooth spaces
that involve a weak-to-weak continuous control-to-state map $S$, 
the nonlinearity of the considered system dynamics necessarily implies
that there exist examples of desired states and controls 
for which \ref{eq:problem} is nonuniquely solvable and ill-posed in the sense of Hadamard. 
What is important to note in this context is that,
although our results show that such cases exist, they do not make any statement about how often 
they are encountered. To obtain additional information about the size and/or geometric properties of the set of tuples 
$(y_d, u_d)$ for which \ref{eq:problem} possesses more than one solution, 
one can proceed along the lines of \cref{th:nonunique,th:nonstable} and invoke
results on the exceptional sets of metric projections, cf.\ \cite{Westphal1989} and the references therein.
This, however, typically requires additional assumptions. We remark that, similarly, 
it is also possible to generalize the results of \cref{th:nonunique,th:nonstable} to problems 
that involve additional state and/or control constraints  $y \in Y_{\textup{ad}} \subset Y$ and $u \in U_{\textup{ad}} \subset U$
provided these constraints still allow to prove the nonconvexity and weak closedness of the set $M:=
\left \{
(S(u), u) \mid u \in U_{\textup{ad}},\,  S(u) \in Y_{\textup{ad}} 
\right \} \subset Y \times U$. Lastly, we would like to mention that studying the 
(non)uniqueness of solutions of \ref{eq:problem} becomes much more involved 
if $Y$ and $U$ are not assumed to be uniformly smooth and uniformly convex and if the exponent $p$
is also allowed to take the value one. 
(Such cases occur, for instance, in the context of bang-bang  and $L^1$-tracking-type optimal control problems, see \cite{Casas2012:1}
and \cite[Example 3.11]{ChristofVexler2021}.)
On the one hand, in spaces that are not uniformly smooth and uniformly convex,
solutions of problems of the form \ref{eq:problem} can be nonunique even when $S$ is the identity map.
Compare, for instance, with the example 
\begin{linenomath}\begin{equation*}
\min_{y \in \R^2, \, u \in \R^2} \|(1, 0)^T - y\|_\infty^2 + \|u\|_\infty^2 
\quad
\text{s.t.\ } y = u
\end{equation*}\end{linenomath}
involving the $\infty$-norm $\|\cdot\|_\infty$ on $\R^2$ in this context. On the other hand, 
in the absence of uniform convexity and uniform smoothness,
it is also possible that a Chebyshev set is nonconvex and that 
a problem of the form \ref{eq:problem} involving a non-affine $S$ is uniquely solvable for all 
$(y_d, u_d)$, cf.\  \cite[Example 2.11]{Fletcher2015}. 
As a consequence, general purpose results analogous to \cref{th:nonunique,th:nonstable} 
are not available if the assumptions of uniform smoothness and uniform convexity on $Y$ and $U$ are dropped.%
}%

\section*{Acknowledgments} We would like to thank 
Gerd Wachsmuth for making us aware of the concept of Chebyshev sets.


\bibliographystyle{alpha}

\medskip
Received xxxx 20xx; revised xxxx 20xx.
\medskip

\end{document}